\documentclass[12pt]{amsart}
\usepackage{amsmath, amssymb, color}
\usepackage{amsthm}
\usepackage[dvipdfmx]{graphicx}
\usepackage{multirow,bigdelim}
\usepackage{array}
\usepackage{epsfig}
\usepackage{amscd}
\pagestyle{headings}
\textwidth=16cm
\textheight=22.2cm
\topmargin=0cm
\oddsidemargin=0cm
\evensidemargin=0cm
\footskip=2cm

\numberwithin{equation}{section}
\theoremstyle{definition}
\newtheorem{thm}{Theorem}[section]
\newtheorem{cor}[thm]{Corollary}

\newtheorem{defn}[thm]{Definition}

\newtheorem{rem}[thm]{Remark}

\newtheoremstyle{mydefinition}
  {}   
  {}   
  {}  
  {0pt}       
  {} 
  {.}         
  {5pt plus 1pt minus 1pt} 
  {}          

\newtheoremstyle{citing}
  {}
  {}
  {\itshape}
  {}
  {\bfseries}
  {.}
  {.5em}
  {\thmnote{#3}}

\theoremstyle{citing}
\newtheorem*{citingtheorem}{}

\makeatother

\newcommand{\Z}{\mathbb{Z}}
\newcommand{\N}{\mathbb{N}}
\newcommand{\Xmn}{X^{m,n}}
\newcommand{\Kmn}{K^{m,n}}

\newcommand{\Em}{E_m}
\newcommand{\Eam}{E^{\ast}_m}
\newcommand{\En}{E_n}
\newcommand{\Ean}{E^{\ast}_n}
\newcommand{\ve}{\varepsilon}

\title{Gluck twists along 2-knots with periodic monodromy}
\author{Mizuki Fukuda}
\address{Mathematical Institute, Tohoku University, Sendai, 980-8578, Japan}
\email{fukuda.mizuki.r5@dc.tohoku.ac.jp}
\subjclass[2010]{Primary~57Q45; Secondary~57M60, 57M27}
\keywords{2-knots, circle actions, Gluck twists}

\begin{document}
\maketitle
\vspace{-5mm}
\begin{abstract}
The union of singular orbits of an effective locally smooth circle action on the 4-sphere consists of two 2-knots, $K$ and $K^{\prime}$, intersecting at two points transversely.
Each of $K$ and $K^{\prime}$ is called a branched twist spin. A twist spun knot is an example of a branched twist spin. 
The Gluck twists along branched twist spins are studied by Fintushel, Gordon and Pao. 
In this paper, we determine the 2-knot obtained from $K$ by the Gluck twist along $K^{\prime}$.
As an application, we give infinitely many pairs of inequivalent branched twist spins whose complements are homeomorphic.
\end{abstract}

\section{Introduction}
A branched twist spin is a 2-knot in $S^4$ that is invariant under an effective locally smooth $S^1$-action on $S^4$, which is studied by Pao in~\cite{Pa} and then Hillman and Plotnick in~\cite{HP}. 
It was not obvious that the total space of a branched twist spin is $S^4$.
Historically, Montgomery and Yang showed that effective locally smooth $S^1$-actions are classified into 
 four types~\cite{MY},  Fintushel showed that 
 there is a bijection between orbit data and weak equivalence classes of $S^1$-actions on homotopy $4$-spheres~\cite{Fi}, 
 and then Pao showed that the homotopy $4$-sphere is actually the $4$-sphere~\cite{Pa} by using a certain induction.

We introduce a branched twist spin briefly.
Suppose that $S^1$ acts locally smoothly and effectively on $S^4$ and the orbit space is $S^3$.
Then there are at most two types of exceptional orbits called $\Z_m$-type and $\Z_n$-type, where $m,n$ are coprime positive integers.  
Let $E_m$ (resp. $E_n$) be the set of exceptional orbits of $\Z_m$-type (resp. $\Z_n$-type) and $F$ be the fixed point set.
The image of the orbit map of $E_n$, denoted by $E^{\ast}_n$, is an open arc in the orbit space $S^3$,
 and that of $F$, denoted by $F^{\ast}$, is the two points in $S^3$ which are the end points of $E^{\ast}_n$.
It is known that $E^{\ast}_m \cup E^{\ast}_n \cup F^{\ast}$ constitutes a $1$-knot $K$ in $S^3$ 
 and $E_n \cup F$ is diffeomorphic to the 2-sphere.  
The {\it $(m,n)$-branched twist spin} of $K$ is defined as $E_n \cup F$.
Note that it is known by Plotnick that a fibered $2$-knot is a branched twist spin if and only if its monodromy is periodic~\cite{Pl}. Therefore, this class has special importance among other known classes of fibered $2$-knots. 
Also note that spun knots and twist spun knots are included in the class of branched twist spins.
The definition of a branched twist spin is generalized for $(m,n)\in \mathbb{Z} \times \mathbb{N}$ in~\cite{F} by taking the orientation of $S^4$ and that of the $S^1$-action into account, see Section 2.1. We denote it by $K^{m,n}$.

The Gluck twist is one of the important surgeries of 4-manifolds.
It is known that a manifold obtained from $S^4$ by the Gluck twist along a 2-knot is a homotopy 4-sphere.
However, it is still a question whether the Gluck twist along a 2-knot yields again $S^4$ or not~\cite{Ki}.
In 1976, Gordon showed that the manifold obtained from $S^4$ by the Gluck twist along an $m$-twist spun knot $K^{m,1}$ is always $S^4$ for any $K$ and integer $m$~\cite{G},
and Pao showed the same statement for all branched twist spins implicitly~\cite{Pa}.

The aim of our study is to clarify which 2-knot can be obtained by the Gluck twist along a branched twist spin.
Our main theorem is the following:
\begin{citingtheorem}[Theorem \ref{thm2}]
Let $(m,n) \in (\Z \setminus \{0\}) \times \N$ be a coprime pair. Then $K^{m+n, n}$ is obtained from $K^{m,n}$ by the Gluck twist along $K^{\ve n, \ve m}$, where $\ve= 1$ if $m>0$ and $\ve= -1$ if $m<0$.
\end{citingtheorem}
In consequence, we see that the 2-knot treated in the paper of Gordon \cite{G} is $K^{\ve^{\prime}m,\ve^{\prime}(m+1)}$,
where $\ve^{\prime} =1$ if  $m+1>0$ and $\ve^{\prime}= -1$ if $m+1<0$ .
We remark that Pao used a surgery along a branched twist spin to show that the total space is $S^4$.
Actually this surgery is nothing but the Gluck twist observed in Theorem~\ref{thm2}.
The proof of 
Theorem~\ref{thm2} is based on the Gordon's method in~\cite{G}.
The 4-sphere decomposes into five connected pieces with respect to the orbit data of the $S^1$-action. 
Both of the complements of $\Kmn$ and $K^{\ve n,\ve m}$ can be obtained by gluing some of these pieces, and filling the remaining pieces correspond to the Gluck surgeries.
The proof is done by observing this decomposition more precisely with information of the $S^1$-action.
As a corollary, we have the following. 
\begin{citingtheorem}[Corollary \ref{cor}]
Assume that $m$ is odd and $K^{m,n}$ is non-trivial. Then $K^{m,n}$ and $K^{\ve^{\prime}m,\ve^{\prime}(m+n)}$ are not equivalent but their complements are homeomorphic, 
where $\ve^{\prime}= 1$ if $m+n>0$ and $\ve^{\prime}= -1$ if $m+n<0$.
\end{citingtheorem}

This paper is organized as follows:
In section 2, we give a decomposition of $S^4$ with respect to the orbit data of an $S^1$-action, define the $(m,n)$-branched twist spin as an oriented $2$-knot, introduce the Gluck twist briefly and set notations that will be used in Section 3.  
In Section 3, we explicitly explain the Gluck twist along an $(m,n)$-branched twist spin by showing the gluing maps of the pieces of the decomposition and prove the main theorem.
\ \\
\ \\
{\bf Acknowledgements.}
The author would like to express his deep gratitude to Masaharu Ishikawa for the helpful suggestions and his encouragement. 
 The author 
 is 
 supported by JSPS KAKENHI Grant Number 18J11484.

\section{Preliminaries}
\subsection{Branched twist spin}
Suppose that $S^4$ has an effective locally smooth $S^1$-action.
Let $E_m$ denote the set of exceptional orbits of $\mathbb{Z}_m$-type, where $m$ is a positive integer, 
and $F$ be the fixed point set.
Let $E^{\ast}_m$ and $F^{\ast}$ denote the image of $E_m$ and $F$ by the orbit map, respectively.
Montgomery and Yang showed that effective locally smooth $S^1$-actions are classified into 
the following four types:
(1) $\{D^3\}$,(2) $\{S^3\}$, (3) $\{S^3 , m\}$, (4) $\{ (S^3, K), m ,n\}$, which are called orbit data~\cite{MY}. 
The $3$-ball and the $3$-sphere in these notations represent the orbit spaces.
In case (4), the union $E^{\ast}_m \cup E^{\ast}_n \cup F^{\ast}$ constitutes a $1$-knot $K$ in the orbit space $S^3$ 
and the union $E_n \cup F$ constitutes a 2-knot in $S^4$.
In case (3), for an arc $A^{\ast}$ in $S^3$ whose end points are $F^{\ast}$, the union of the preimage of $A^{\ast}$ and $F$ constitutes a 2-knot in $S^4$, which is called a twist spun knot.

We recall the definition of $(m,n)$-branched twist spins for $(m,n)\in\mathbb{Z}\times\mathbb{N}$ in~\cite{F}.
Fix the orientations of $S^4$ and the $S^1$-action.
Set $K = \Eam \cup \Ean \cup F^{\ast}$, which is a 1-knot in $S^3$, and $N(K)$ to be a compact tubular neighborhood of $K$.
Let $(m,n)$ be a pair of integers in $(\mathbb{Z} \setminus \{0\}) \times\mathbb N$
such that $|m|$ and $n$ are coprime. 
We decompose the orbit space $S^3$ into five connected pieces as follows:
\begin{equation}\label{matrix}
S^3 =  (D^{3\ast}_1\sqcup D^{3\ast}_2) \cup (E^{c\ast}_n \times D^2)  \cup (E^{c\ast}_m \times D^2) \cup X,
\end{equation}
where $X = S^3 \setminus {\rm int}N(K)$, $D^{3\ast}_1 \sqcup D^{3\ast}_2$ is a compact neighborhood of $F^{\ast}$ and $E^{c\ast}_m$ and $E^{c\ast}_n$ are the connected components of $K \setminus \text{int}(D^{3\ast}_1 \cup D^{3\ast}_2)$ 
such that  $E_m^{c\ast}\subset E_m^{\ast}$ and $E_n^{c\ast}\subset E_n^{\ast}$, see Figure~\ref{compK}.
Using this decomposition, we have a decomposition of $S^4$ as follows: 
Let $p: S^4 \to S^3$ denote the orbit map.
Each point of $X$ is the image of a free orbit.
Thus the preimage $p^{-1}(X)$ is diffeomorphic to $X \times S^1$ and $p|_{X\times S^1} : X\times S^1 \to X$ is the first projection 
since $H^2(X;\Z) = 0$. 
Let $B^4_i$ be a linear slice at the fixed point in $p^{-1}(D^{3\ast}_i)$, which is a closed 4-ball, for $i=1,2$.
Choosing a point $z^{\ast}_m \in E^{c\ast}_m$, let $D^{2\ast}_{z^{\ast}_m}$ be a 2-disk in $S^3$ centered at $z^{\ast}_m$ and transversal to $E^{c\ast}_m$.
The preimage $p^{-1}(D^{2\ast}_{z^{\ast}_m})$ is a solid torus $V_m$ whose core is an exceptional orbit of $\mathbb{Z}_m$-type.
In the same way, choosing a point $z^{\ast}_n \in E^{c\ast}_n$, 
let $D^{2\ast}_{z^{\ast}_n}$ be a 2-disk in $S^3$ centered at $z^{\ast}_n$ and transversal to $E^{c\ast}_n$.
The preimage $p^{-1}(D^{2\ast}_{z^{\ast}_n})$ is a solid torus $V_n$ whose core is an exceptional orbit of $\mathbb{Z}_n$-type.
Then $S^4$ is decomposed into the five connected pieces:
\begin{equation}\label{dec}
S^4 = (B^4_1 \sqcup B^4_2)  \cup(V_n \times E^{c\ast}_n)  \cup  (V_m \times E^{c\ast}_m)\cup (X \times S^1).
\end{equation}
\begin{figure}[htbp]
\centering
\includegraphics[scale=0.3]{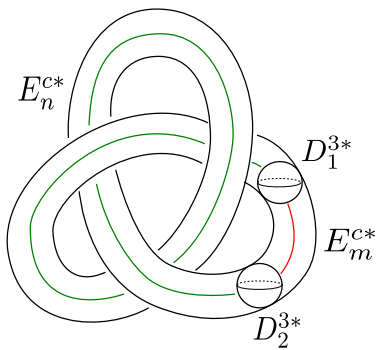}
\caption{Decomposition of $N(K)$}
\label{compK}
\end{figure}
\begin{defn}[Branched twist spin]
Let $K$ be an oriented knot in $S^3$.
For each pair $(m,n)\in\mathbb{Z}\times \mathbb{N}$ with 
$m\ne 0$ such that $|m|$ and $n$ are coprime,
let $K^{m,n}$ denote the $2$-knot $E_n\cup F$.
If $(m,n)=(0,1)$ then define $K^{0,1}$ to be the spun knot of $K$.
The $2$-knot $K^{m,n}$ is called the $(m,n)$-$branched\ twist\ spin$ of $K$.
\end{defn}

Note that the branched twist spin $K^{m,1}$ constructed from $\{(S^3 , K),m,1\} $ is the $m$-twist spun knot of $K$.

\subsection{Gluck twist}
Let $M$ be an $(n+2)$-dimensional manifold and $L$ be an $n$-knot in $M$.
We can construct a new manifold by removing an open neighborhood ${\rm int} N(L)$ of $L$ and regluing $S^n \times D^2$ with some homeomorphism 
$\gamma: S^n \times S^1 \to S^n \times S^1$.
The pair  $(M, S^n)$ only depends on the homeotopy class of  $\gamma$, and Gluck showed the group of homeotopy classes is isomorphic to $\Z_2 \times \Z_2 \times \Z_2$ if $n \geq 2$~\cite{Gl}.
The first two factors correspond to the orientation-reversals of $S^n$ and $S^1$ and the last factor is generated by $\sigma =\nu \cup \nu^{\prime}$ defined by
$$
\nu (x,(r,\theta), \phi) = (x, (r, \theta - \phi), \phi)\ \ \ \ \ (x,(r,\theta),\phi) \in (\partial D^{n-1} \times D^2) \times S^1 \subset S^{n} \times S^1,
$$
$$
\nu^{\prime} (x,\theta , \phi) = (x, \theta - \phi, \phi)\ \ \ \ \  (x, \theta, \phi) \in (D^{n-1} \times \partial D^2) \times S^1\subset S^{n} \times S^1.
$$
The map $\sigma$ represents the twist of $S^n$ about its polar $S^{n-2}$ once while $S^n$ rotates one time along $S^1$.
The operation that constructs a new manifold by removing an open neighborhood ${\rm int} N(L)$ of $L$ and regluing $S^n \times D^2$ with $\sigma$ is called the Gluck twist of $M$ along $L$.
We denote by $\Sigma(L)$ the manifold obtained by the Gluck twist of $S^4$ along $L$.

For twist spun knots,  Gordon showed $\Sigma(K^{m,1}) = \Sigma(K^{m+1,1})$  by constructing these manifolds from the pieces in~\eqref{dec}.

\section{Proof of Theorem 3.1.}
In this section, we assume that $m \neq 0$, that is, we only consider the $S^1$-action whose orbit data is $\{(S^3,K),m,n\}$.
Note that the orbit data $\{S^3,m\}$ is regarded as $\{(S^3,K),m,1\}$ by taking an arc $A^{\ast}$ in the images of free orbits in $S^3$ so that the union of $E^{\ast}_m$ and $A^{\ast}$ is $K$.

\subsection{Decomposition of $S^4$ along $\Kmn$}

Let $N(K^{m,n})$ be a compact tubular neighborhood of $K^{m,n}$ and  $X^{m,n} = S^4 \setminus \text{int}N(K^{m,n})$ be the knot complement of $K^{m,n}$.
In this section, we construct $X^{m,n}$ from the pieces in~\eqref{dec} by defining attaching maps concretely. 

From the given orientations of $S^4$ and the $S^1$-action,
we first fix coordinates on $\partial X \times S^1$ as follows.
Let $(\theta, x)$ be coordinates on $\partial X \cong \partial D^2 \times S^1$ such that $\theta$ is the meridian and $x$ is the longitude of $K$ in $S^3$.
The coordinates on $\partial X \times [0,1] \subset X$ are given by $(\theta, x, y)$, where $\partial X$ lies in $\partial X \times \{0\}$.
Reversing the direction of $\theta$ if necessary, we may assume that the coordinates $(\theta, x, y, h)$ on $\partial X \times [0,1] \times S^1$ are consistent with the orientation of $S^4$, where $h$ is the direction of the $S^1$-action.
The orientation of $X$ is given by $(\theta,x,y)$.
Then, using the projection $\partial X \times [0,1] \times S^1\to \partial X \times S^1$, we have the coordinates $(\theta, x, h)$ on $\partial X \times S^1$.
Next we define coordinates on $\partial V_m \times E^{c\ast}_m$. 
The 4-ball $B^4_1$ is homeomorphic to $D^2_m \times D^2_n$, where $D_m^2$ (resp. $D_n^2$) is a 2-disk whose center corresponds to the exceptional orbit of $\Z_m$-type (resp. $\Z_n$-type).
The boundary $\partial B^4_1$ is homeomorphic to $V_ m \cup V_n$, where $V_m = D_m^2 \times \partial D_n^2$ and $V_n = \partial D_m^2 \times D_n^2$.
Let $(r_1,\theta_1)$ be polar coordinates of $D^2_m$ and $(r_2 ,\theta_2)$ be polar coordinates of $D^2_n$ such that $(r_1, \theta_1, r_2, \theta_2)$ are consistent with the orientation of $S^4$.
We may choose the indices of $B_1$ and $B_2$ such that the direction of $x$ is from the origin of $B_1$ to that of $B_2$ through $E^{c\ast}_m$, see Figure~\ref{twins}.
The coordinates on $\partial V_m \times E^{c\ast}_m$ are given as $(\theta_1, r_2, \theta_2)$ and $r_2$ coincides with $x$ on $\partial V_m \times E^{c\ast}_m$.
Thus the coordinates on $\partial V_m \times E^{c\ast}_m$ are given as $(\theta_1, x, \theta_2)$.

As we mentioned in Section 2.1, the free orbits are curves on $\partial V_m \cong \partial D_m^2 \times \partial D_n^2$ rotating, up to orientation, $m$ times along $\partial D_n^2$ and $n$ times along $\partial D_m^2$. 
Changing the coordinates $(\theta_1, \theta_2)$ into $(-\theta_1, -\theta_2)$ if necessary, 
we assume that the free orbits on $\partial V_m \cong \partial D_m^2 \times \partial D_n^2$ are rotating $\ve m$ times along $\partial D_n^2$ and $\ve n$ times along $\partial D_m^2$,  where $\ve = 1$ if $m\geq0$ and $\ve = -1$ if $m < 0$.  
 Comparing the free orbits in $\partial D^2_m \times E^{c\ast}_m \times \partial D^2_n$ and $\partial X \times S^1$, 
 we can see that the gluing map $g : \partial D^2_m \times E^{c\ast}_m \times \partial D^2_n \to \partial X \times S^1$
 of $ D^2_m \times E^{c\ast}_m \times \partial D^2_n$ and $ X \times S^1$, that yields $X^{m,n}$, satisfies the equality
 $$
 (\theta, x , h) = g(\alpha \theta + \ve n h, x , -\beta \theta + \ve m h),
 $$
 where $\alpha$ and $\beta$ are integers satisfying $m\alpha + n \beta = \ve$.
Since the orientations of $(\theta, x, y, h)$ and $(r_1, \theta_1, r_2, \theta_2)$ coincide with that of $S^4$, 
the map $g$ is orientation preserving.
In terms of the coordinates $(\theta_1, x, \theta_2)$, we can write $g$ as
$$
g(\theta_1, x , \theta_2) = (\ve m \theta_1 - \ve n \theta_2, x, \beta \theta_1 +\alpha \theta_2).
$$
Let $c_{\theta}, c_{h} ,c_{\theta_1}$ and $c_{\theta_2}$ be 1-cycles given by
$$
c_{\theta} = \{(\theta,\hat{x},\hat{h})\in \partial X \times S^1\ |\ \theta \in [0,2\pi]\},
$$
 $$
c_{h}= \{(\hat{\theta}, \hat{x}, h)\in \partial X \times S^1 \ |\ h \in [0,2\pi]\},
$$
$$
c_{\theta_1}= \{(\theta_1, \hat{x} ,\hat{\theta}_2)\in \partial V_m \times E^{c\ast}_m\ |\ \theta_1 \in [0,2\pi]\},
$$
$$
c_{\theta_2}= \{(\hat{\theta}_1, \hat{x} ,\theta_2)\in \partial V_m \times E^{c\ast}_m\ |\ \theta_2 \in [0,2\pi]\},
$$
where $\hat{x}, \hat{h}, \hat{\theta}_1$ and $\hat{\theta}_2$ are constants.
These cycles are oriented according to the coordinates $\theta, h,\theta_1$ and $\theta_2$.
By using these 1-cycles, the induced map 
$g_{\ast}: H_1(\partial D^2_m \times E_m^{c\ast}\times \partial D^2_n)\to H_1((\partial X \times S^1) \cap (\partial V_m\times E_m^{c\ast}))$ 
 is represented as
\begin{equation}\label{matrix}
(g_{\ast}(c_{\theta_1}), g_{\ast}(c_{\theta_2}) ) = 
(c_{\theta}, c_h)
\begin{pmatrix}
\ve m & -\ve n\\
\beta & \alpha
\end{pmatrix},
\end{equation}
which corresponds to $f^{-1}_{\ast}$ defined in~\cite{F}.

\begin{figure}[htbp]\label{figure3}
\centering
\includegraphics[scale= 0.5]{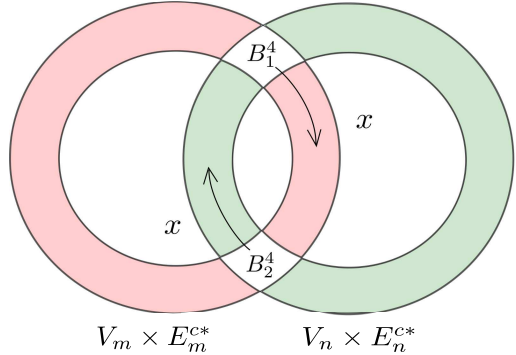}
\caption{The coordinate $x$ on $V_m \times E^{c\ast}_m$  and $V_n \times E^{c\ast}_n$}
\label{twins}
\end{figure}

In summary, we have
\begin{equation}\label{compKmn}
\begin{aligned}
S^4 &= N(\Kmn) \cup \Xmn \\
&= ((B^4_1 \cup B^4_2)\cup(\partial D^2_m \times E^{c\ast}_n \times D^2_n) )  \cup \Xmn \\
&=   ((B^4_1 \cup B^4_2)\cup(\partial D^2_m \times E^{c\ast}_n \times D^2_n) )\cup
((D^2_m \times E^{c\ast}_m \times \partial D^2_n) \cup_g (X \times S^1)).
\end{aligned}
\end{equation}

Next we define the gluing map of $\partial D^2_m \times E^{c\ast}_n \times D^2_n $ and $X \times S^1$.
As we did for $B^4_1$,  the coordinates  of  $B^4_2\cong {D^{2}_m}^{\hspace{-2pt} \prime} \times {D^2_n}^{\prime}$  is defined as 
($r^{\prime}_1,\theta^{\prime}_1,r^{\prime}_2,\theta^{\prime}_2)$, where $(r^{\prime}_1,\theta^{\prime}_1)$ and $(r^{\prime}_2,\theta^{\prime}_2)$ 
are polar coordinates on  ${D^{2}_m}^{\hspace{-2pt} \prime}$ and  ${D^2_n}^{\prime}$, respectively.
Since ${D^{2}_m}^{\hspace{-2pt} \prime} \times \partial {D^2_n}^{\prime} \subset  D^2_m \times E^{c\ast}_n \times \partial D^2_n$
 we may assume that $r_1 = r^{\prime}_1, \theta_1 = \theta^{\prime}_1$  and $\theta_2 = \theta^{\prime}_2$.
Comparing the  coordinates $r_2$  of $(r_1,\theta_1,r_2,\theta_2)$ and  $r^{\prime}_2$ of $(r^{\prime}_1,\theta^{\prime}_1,r^{\prime}_2,\theta^{\prime}_2)$,  we have $r_2 = -r^{\prime}_2 $, see Figure~\ref{direct}.
Then, as we did for $g$, the gluing map $e: \partial D^2_m \times E^{c\ast}_n \times \partial D^2_n \to \partial X \times S^1 $ is defined as 
$$
e(\theta^{\prime}_1,r^{\prime}_2,\theta^{\prime}_2) = (\ve m \theta^{\prime}_1 - \ve n \theta^{\prime}_2, r^{\prime}_2, \beta \theta^{\prime}_1 +\alpha \theta^{\prime}_2).
$$
Replacing $x$ by $r_2'$, we have 
\begin{equation}
e(\theta_1,x,\theta_2) = (\ve m \theta_1 - \ve n \theta_2, x, \beta \theta_1 +\alpha \theta_2).
\end{equation} 
Since the disjoint union of 4-balls $B^4_1 \cup B^4_2$ is a cone of the union of $V_n \times \partial E^{c\ast}_n$ and $V_n \times \partial E^{c\ast}_n$,
 $B^4_1 \cup B^4_2$ and $(\partial D^2_m \times E^{c\ast}_n \times D^2_n) \cup_e X^{m,n}$ are glued identically along $\partial B^4_1 \cup \partial B^4_2$.
 This means $e \cup {\rm id} $ is the attaching map of $(\partial D^2_m \times E^{c\ast}_n \times D^2_n) \cup  (B^4_1 \cup B^4_2)$ and  $ X^{m,n}$, that yields $S^4$.

\begin{figure}[htbp]
\centering
\includegraphics[scale=0.45]{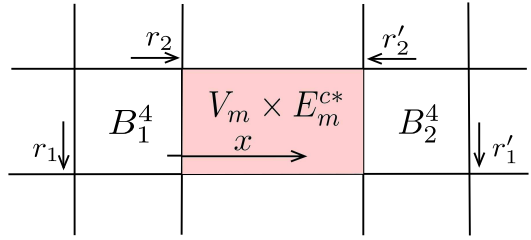}
\caption{The coordinates $r_1, r_2, r^{\prime}_1,r^{\prime}_2$ and $x$}
\label{direct}
\end{figure}

\subsection{Gluck twist along $K^{m,n}$}

We give a decomposition of $\Sigma(K^{m,n})$ from the decomposition \eqref{compKmn}.
By definition, 
$K^{m,n}$ is the core of the union of $D^2_m \times \partial F^{\ast} \times D^2_n$ and $\partial D^2_m \times E^{c\ast}_n \times D^2_n$ and
$\Sigma(K^{m,n})$ is diffeomorphic to $(S^2 \times B^2) \cup_{\sigma} X^{m,n}$.
Here
$$
S^2 \times B^2 \cong (D^2 \times \partial I \times B^2) \cup_{{\rm id}_{\partial D^2 \times \partial I \times  B^2}} (\partial D^2 \times I \times B^2)
$$
is regarded as 
 $$
 (D^2_m \times \partial E^{c\ast}_n \times D^2_n) \cup_{{\rm id}_{\partial D^2_m \times \partial E^{c\ast}_m \times  D^2_n}} (\partial D^2_m \times E^{c\ast}_n \times D^2_n),
 $$
 where ${\rm id}_{\partial D^2 \times \partial I \times  B^2}$ is the canonical gluing map on $\partial D^2 \times \partial I \times  B^2$ and ${\rm id}_{ \partial D^2_m \times \partial E^{c\ast}_n \times  D^2_n}$ is that on $\partial D^2_m \times \partial E^{c\ast}_n \times  D^2_n$.
We use either $D^2$ or $B^2$ instead of  $D^2_{m}, D^2_{n}$ and use $I$ instead of $E^*$
since we do not define the $S^1$-action on 
the pieces $(D^2 \times \partial I \times B^2) \cup_{{\rm id}_{\partial D^2 \times \partial I \times  B^2}} (\partial D^2 \times I \times B^2)$ yet.
Note that, before the Gluck twist,  
$D^2 \times \partial I \times \partial B^2$ and $D^2_m  \times \partial E^{c\ast}_m \times  \partial D^2_n$ are glued identically.
We denote this gluing map by $e^{\prime\prime}$.
The Gluck twist $\sigma$ is the union of  $\nu$ and $\nu^{\prime}$ that are defined by
$$
\nu((r_1,\theta_1),x, \theta_2) = ((r_1, \theta_1 - \theta_2), x, \theta_2)\ \  {\rm on}\ \  D^2 \times \partial I \times \partial B^2,
$$
$$
\nu^{\prime}(\theta_1,x,\theta_2) = (\theta_1 - \theta_2, x, \theta_2) \ \ {\rm on}\ \  \partial D^2 \times I \times \partial B^2.
$$
Using these maps, the decomposition of  $\Sigma(\Kmn)$ is given as
\begin{equation}\label{sigma}
\left((D^2 \times \partial I \times B^2) \cup_{{\rm id}_{\partial D^2 \times \partial I \times  B^2}} (\partial D^2 \times I \times B^2)\right)
 \cup_{\lambda \cup \lambda^{\prime}} 
 \left((D_m^2 \times E_m^{c\ast}\times \partial D_n^2) \cup_g (X \times S^1) \right),
\end{equation}
where $\lambda =  e^{\prime\prime} \circ  \nu$ and $\lambda^{\prime} = e \circ \nu^{\prime}$.
Focusing on the coordinates of $\partial D^2 \times \partial B^2$, we may represent the map $\nu \cup \nu^{\prime}$ by the matrix
$\begin{pmatrix}
1 & -1\\
0 & 1\\
\end{pmatrix}$ since it is given by
$
(c_{\theta_1}, c_{\theta_2}) \mapsto (c_{\theta_1}, c_{\theta_2}) 
\begin{pmatrix}
1 & -1\\
0 & 1\\
\end{pmatrix}$.
Therefore $\lambda$ and $\lambda^{\prime}$ are represented by the matrices
$
\begin{pmatrix}
1 & -1 \\
0 & 1 \\
\end{pmatrix}
$ and 
$
\begin{pmatrix}
\ve m & -\ve n \\
\beta & \alpha \\
\end{pmatrix}
\begin{pmatrix}
1 & -1\\
0 & 1
\end{pmatrix},$
 respectively.

The next theorem is proved by Pao in~\cite{Pa}.
We give a more precise proof of this assertion which will be used in the proof of Theorem~\ref{thm2}.
\begin{thm}[Pao]\label{thm1}
Let $(m,n) \in \Z \times \N$ be a coprime pair. Then $\Sigma({K^{m,n}})$ is diffeomorphic to $S^4$.
\end{thm}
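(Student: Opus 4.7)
The plan is to combine an explicit matrix computation for how the Gluck twist modifies the gluing map with an inductive reduction to a base case where the ambient space is manifestly $S^4$. First, I would compute the matrix representation of the new gluing $\lambda^{\prime}=e\circ\nu^{\prime}$ on first homology. Since $\nu^{\prime}$ is represented by $\begin{pmatrix}1&-1\\0&1\end{pmatrix}$, the matrix for $\lambda^{\prime}$ is the product
\[
\begin{pmatrix}\ve m & -\ve n\\\beta & \alpha\end{pmatrix}\begin{pmatrix}1&-1\\0&1\end{pmatrix}=\begin{pmatrix}\ve m & -\ve(m+n)\\\beta & \alpha-\beta\end{pmatrix}.
\]
Since $\gcd(|m|,|m+n|)=\gcd(|m|,n)=1$ and $m(\alpha-\beta)+(m+n)\beta=m\alpha+n\beta=\ve$, this matrix is of exactly the form~\eqref{matrix} of $g_{\ast}$ for an $S^{1}$-action with orbit data involving exceptional-orbit orders $|m|$ and $|m+n|$, relative to the Bezout coefficients $(\alpha-\beta,\beta)$.

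Second, I would verify that the rest of the decomposition in~\eqref{sigma} matches the five-piece decomposition~\eqref{dec} for this new orbit data. The pieces $X\times S^{1}$ and $V_{m}\times E^{c\ast}_{m}$ are untouched by $\sigma$, while $(D^{2}\times\partial I\times B^{2})\cup(\partial D^{2}\times I\times B^{2})\cong S^{2}\times B^{2}$, together with the two linear slices $B^{4}_{1},B^{4}_{2}$ at the fixed points and the neighborhood of the $\Z_{n}$-exceptional set, can be reassembled---after the twist $\sigma$---into the standard five-piece decomposition for orbit data $\{(S^{3},K),|m|,|m+n|\}$. Thus $\Sigma(K^{m,n})$ inherits an effective locally smooth $S^{1}$-action of this new type, and by Fintushel's bijection~\cite{Fi} it is determined up to diffeomorphism by that orbit data.

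Finally, I would close the argument by induction on $|m|+n$, running the Euclidean algorithm on the coprime pair. The base case $n=1$ is Gordon's theorem~\cite{G}: $\Sigma(K^{m,1})=S^{4}$ for every $m$. In the inductive step, the identification above (together with the analogous computation for a Gluck twist that shifts the first coordinate instead of the second) lets one trade the pair $(|m|,n)$ for a strictly smaller pair, closing the induction. The main obstacle I anticipate is the careful bookkeeping of orientations---the sign $\ve$, the conventions chosen for $\theta_{1},\theta_{2}$ so that $(\theta,x,y,h)$ and $(r_{1},\theta_{1},r_{2},\theta_{2})$ agree with the orientation of $S^{4}$, and the identification $r_{2}=-r_{2}^{\prime}$ from Figure~\ref{direct}---all of which must be tracked to ensure that the reinterpreted pieces really assemble the standard decomposition of $S^{4}$ with the new orbit data, rather than an orientation-reversed variant.
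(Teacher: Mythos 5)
Your approach is genuinely different from the paper's proof of Theorem~\ref{thm1}, and is closer in spirit to the argument the paper reserves for Theorem~\ref{thm2}. The paper's proof of Theorem~\ref{thm1} does not read off a new $S^1$-action on $\Sigma(K^{m,n})$ and invoke Fintushel at all; instead it constructs explicit coordinate changes (the maps $u$, $v$, $w$, with matrix representatives $\begin{pmatrix}0&1\\-1&1\end{pmatrix}$, $\begin{pmatrix}0&-1\\1&1\end{pmatrix}$, $\begin{pmatrix}0&1\\-1&0\end{pmatrix}$) that rearrange the five-piece decomposition~\eqref{sigma} of $\Sigma(K^{m,n})$ into the decomposition~\eqref{sigma2} of $\Sigma(K^{m+n,n})$, so that $\Sigma(K^{m,n})\cong\Sigma(K^{m+n,n})$ is seen directly as a diffeomorphism of glued pieces; the Euclidean algorithm then reduces to Gordon's base case $\Sigma(K^{k,1})=S^4$. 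Your first-paragraph matrix computation is correct and is exactly the paper's computation of $\lambda^{\prime}=e\circ\nu^{\prime}$ before post-composing with $\hat u$.

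That said, there are concrete gaps. (1) The claim that the pieces \emph{``can be reassembled into the standard five-piece decomposition for the new orbit data''} is asserted, not shown. After the twist, $V_m\times E^{c\ast}_m$ is still glued to $X\times S^1$ by $g$, while $\partial D^2\times I\times B^2$ is glued by $\lambda^{\prime}$; to produce a locally smooth $S^1$-action on $\Sigma(K^{m,n})$ you must verify that the rotation in the $h$-factor of $X\times S^1$ extends coherently through both gluings and across the 4-balls at the fixed points, and this is precisely the bookkeeping the paper performs with $u$, $v$, $w$ (for Theorem~\ref{thm1}) or with the explicit $S^1$-action extension (for Theorem~\ref{thm2}). (2) The induction does not obviously close: when $m>0$ the pair $(|m|,n)$ is traded for $(|m|,|m+n|)$, whose sum is \emph{larger}, and the ``analogous computation shifting the first coordinate'' is not identified --- the other natural choice $\nu^{\prime -1}$ gives the \emph{same} $\Sigma(K^{m,n})$ since $\sigma\simeq\sigma^{-1}$, so you cannot freely pick the more favorable reduction. (3) Fintushel's bijection tells you $\Sigma(K^{m,n})$ is the unique homotopy $4$-sphere carrying the new orbit data; to conclude this is $S^4$ you either need Pao's theorem (which would make the whole induction superfluous) or you need the induction to supply it, and as written it does not. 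All of these can be repaired, but they amount to reproducing the explicit rearrangement that the paper carries out.
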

\begin{proof}
Since it is known by Gordon that $\Sigma(K^{0,1})$ is diffeomorphic to $S^4$ \cite{G}, we assume that $m\neq 0$. 
The proof is based on the Gordon's argument in~\cite{G}, that is, 
we rearrange the decomposition~\eqref{sigma} to another decomposition
\begin{equation}\label{sigma2}
\left( (D^2 \times \partial I \times B^2) \cup_{\hat{\rm id}} (\partial D^2_m \times E^{c\ast}_m \times D^2_n)\right)
\cup_{\mu \cup \mu^{\prime}}
 \left((D^2 \times I \times \partial B^2) \cup_{\tilde{g}} (X \times S^1)\right),
\end{equation}
where $\mu$ and $\mu^{\prime}$ will be chosen such that the glued 4-manifold becomes $\Sigma(K^{m+n,n})$.
See Figure~\ref{arrange} for this rearrangement.

\begin{figure}[htbp]
\centering
\includegraphics[scale=0.68]{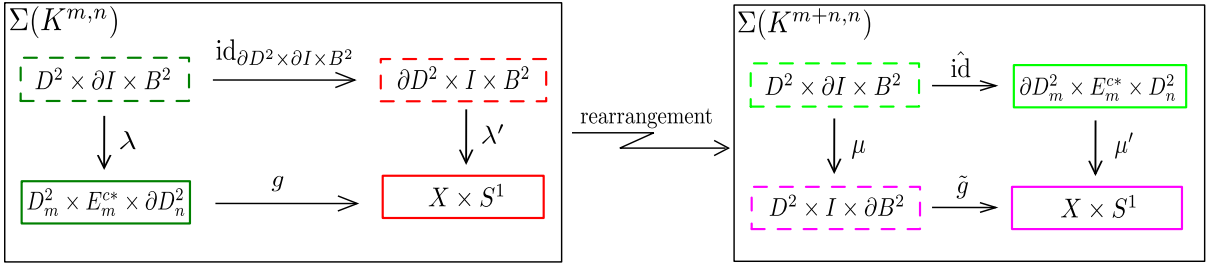}
\caption{Decompositions of $\Sigma(K^{m,n})$ and $\Sigma(K^{m+n,n})$}
\label{arrange}
\end{figure}

First, we focus on $(\partial D^2 \times I \times B^2) \cup_{\lambda^{\prime}} (X \times S^1)$. See the box of First step in Figure~\ref{maps}.
Let ${\rm id}_{X \times S^1}$ be the identity map on $X \times S^1$ and $u : D^2 \times I \times \partial B^2 \to \partial D^2 \times I \times B^2 $ be the map defined by
$$
u((r_1,\theta_1),x,\theta_2) = (\theta_2, x, (r_1, -\theta_1 + \theta_2)),
$$
which corresponds to the matrix
$\begin{pmatrix}
0 & 1 \\
-1 & 1 
\end{pmatrix}$.
Set ${\rm id}_{\partial X \times S^1}$ to be the restriction of ${\rm id}_{X\times S^1}$ to ${\partial X \times S^1}$ 
and set $\hat{u} = u|_{\partial D^2 \times I \times \partial B^2}$.
 Then $u^{-1}\cup {\rm id}_{X \times S^1} $ is a homeomorphism from
  $(\partial D^2 \times I \times B^2) \cup_{\lambda^{\prime}} (X \times S^1)$
  to 
 \begin{equation} \label{pieceA}
 (D^2 \times I \times \partial B^2) \cup_{\tilde{g}} (X \times S^1),
 \end{equation}
  where $\tilde{g} ={\rm id}_{X \times S^1} \circ \lambda^{\prime}\circ \hat{u}$.
We can check that $\tilde{g}$ corresponds to the matrix
$$
\begin{pmatrix}
1 & 0 \\
0 & 1 \\
\end{pmatrix}
\begin{pmatrix}
\ve m & -\ve n \\
\beta & \alpha \\
\end{pmatrix}
\begin{pmatrix}
1 & -1 \\
0 & 1 \\
\end{pmatrix}
\begin{pmatrix}
0 & 1 \\
-1 & 1 \\
\end{pmatrix}
=
\begin{pmatrix}
\ve(m + n) & -\ve n \\
-\alpha +\beta & \alpha \\
\end{pmatrix}.
$$

If $m<0$ and $|m|<n$ then the left-top entry $\ve(m+n)$ of the above matrix is negative.
In this case, replacing $(\theta_1,x, \theta_2)$ with $(-\theta_1,x, -\theta_2)$,
we may change the matrix 
$\begin{pmatrix}
\ve(m + n)& -\ve n \\
-\alpha + \beta & \alpha  \\
\end{pmatrix}
$
into
$
\begin{pmatrix}
-\ve(m+n)& \ve n \\
\alpha - \beta & -\alpha  \\
\end{pmatrix}=
\begin{pmatrix}
\ve^{\prime}(m+n)& -\ve^{\prime} n \\
\alpha - \beta & -\alpha  \\
\end{pmatrix}
$, where $\ve^{\prime} = 1$ if $m+n>0$ and $\ve^{\prime} = -1$ if $m+n<0$, so that the left-top entry of the above matrix becomes positive.
This is necessary since the matrix presentation of the decomposition in \eqref{matrix} is given with this property.

Next we focus on $(D^2 \times \partial I \times B^2) \cup_{\lambda} (D_m^2 \times E_m^{c\ast}\times \partial D_n^2)$. 
See the box of Second step in Figure~\ref{maps}.
Let
$v: D^2_m \times E^{c\ast}_m \times \partial D^2_n \to \partial D^2_m \times E^{c\ast}_m \times D^2_n$ 
and 
$w : D^2 \times \partial I \times B^2 \to D^2 \times \partial I \times B^2$
 be the homeomorphisms given by 
$\begin{pmatrix}
0 & -1 \\
1 & 1 \\
\end{pmatrix}$
and
$\begin{pmatrix}
0 & 1 \\
-1 & 0 \\
\end{pmatrix}$, respectively.
Set
$\hat{v} = v| _{D^2_m \times \partial E^{c\ast}_m \times \partial D^2_n}$ and 
$\hat{w} = w|_{\partial D^2 \times \partial I  \times B^2}$.
Note that $w$ is chosen so as to exchange the basis of $ D^2$ and $B^2$ in $ D^2 \times B^2$ and
$v$ is chosen so that $\hat{v}\circ \lambda \circ \hat{w} = \hat{{\rm id}}$, 
where $\hat{{\rm id}}$ is the identification from $\partial D^2 \times \partial I \times B^2$ to $\partial D_m^2 \times\partial E_m^{c\ast}\times  D_n^2 \subset \partial D_m^2 \times E_m^{c\ast}\times  D_n^2$.
The map $w^{-1}\cup v$ induces a homeomorphism from $(D^2 \times \partial I \times B^2) \cup_{\lambda} (D_m^2 \times E_m^{c\ast}\times \partial D_n^2)$ to
\begin{equation}\label{pieceB}
(D^2 \times \partial I \times B^2) \cup_{\hat{{\rm id}}} (\partial D^2_m \times E^{c\ast}_m \times D^2_n).
\end{equation}

\begin{figure}[htbp]
\centering
\includegraphics[scale=0.68]{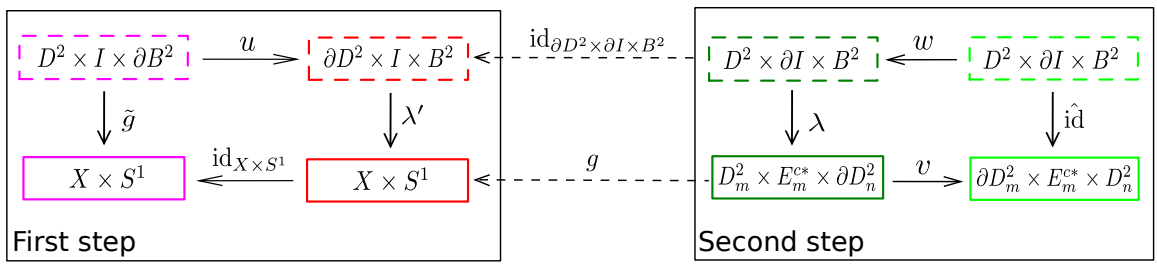}
\caption{Replacing pieces of $\Sigma(K^{m,n})$}
\label{maps}
\end{figure}

Finally we rearrange the decomposition \eqref{sigma} of $\Sigma(\Kmn)$ to the decomposition \eqref{sigma2}  using \eqref{pieceA} and \eqref{pieceB},
 where $\mu$ and $\mu^{\prime}$ are chosen as  $\mu = \hat{u}^{-1} \circ {\rm id}_{\partial D^2 \times \partial I \times  B^2} \circ \hat{w}$ and $\mu^{\prime} = {\rm id}_ {\partial X \times S^1} \circ g \circ \hat{v}^{-1}$.
 Note that these maps are the composites of the maps in the first and second rows in Figure~\ref{maps}.  
 We can check that $\mu$ corresponds to the matrix
$$
\begin{pmatrix}
0 & 1 \\
-1 & 1 \\
\end{pmatrix}^{-1}
 \begin{pmatrix}
1 & 0 \\
0 & 1 \\
\end{pmatrix}
 \begin{pmatrix}
0 & 1 \\
-1 & 0 \\
\end{pmatrix}
=
\begin{pmatrix}
1 & 1 \\
0 & 1 \\
\end{pmatrix}$$
and  
 $\mu^{\prime}$ corresponds to the matrix
  $$
  \begin{pmatrix}
1 & 0 \\
0 & 1 \\
\end{pmatrix}
  \begin{pmatrix}
\ve m & -\ve n \\
 \beta & \alpha  
\end{pmatrix}
\begin{pmatrix}
0 & -1 \\
1 & 1 
\end{pmatrix}^{-1}
 =
  \begin{pmatrix}
\ve (m+n) & \ve m \\
-\alpha +  \beta & \beta 
\end{pmatrix}.$$
By changing the orientations of $\theta_1$ and $\theta_2$ if necessary as before, the latter matrix is replaced by 
$\begin{pmatrix}
\ve^{\prime}(m + n)& \ve^{\prime} m \\
\alpha - \beta  & -\beta 
\end{pmatrix},
$
 which can be written as
$\begin{pmatrix}
\ve^{\prime}(m + n)& -\ve^{\prime} n \\
\alpha - \beta & -\alpha 
\end{pmatrix}
\begin{pmatrix}
1 & 1 \\
0 & 1 
\end{pmatrix}$.
Therefore the map $\mu \cup \mu^{\prime}$ is nothing but the inverse of the Gluck twist $\sigma$ on $S^4$ along the 2-knot $K^{m+n,n}$ that is the core of $(D^2 \times \partial I \times B^2) \cup_{\hat{\rm id}} (\partial D^2_m \times E^{c\ast}_m \times D^2_n)$ before the Gluck twist.

Since the Gluck twist $\sigma$ is isotopic to $\sigma^{-1}$, the 4-manifold on the right in Figure $\ref{arrange}$ is $\Sigma(K^{m+n,n})$.
On the other hand, since the gluing maps in Figure~\ref{maps} are commutative, 
the 4-manifold on the left in Figure~\ref{arrange} is diffeomorphic to that on the right. 
 Thus $\Sigma(\Kmn)$ is diffeomorphic to $\Sigma(K^{m+n,n})$.
All arguments in this proof can be applied to $K^{\ve n,\ve m}$ instead of $K^{m,n}$.
Finally, by using Euclidean algorithm as Pao did, we conclude
$$
\Sigma({K^{m,n}}) = \Sigma({K^{k,1}})\ \ \ \text{for some}\ \ k \in \mathbb{Z}. 
$$
Since $\Sigma({K^{k,1}})$ is diffeomorphic to $S^4$~\cite{G}, the assertion holds.
\end{proof}

\begin{thm}\label{thm2}
Let $(m,n) \in (\Z \setminus \{0\}) \times \N$ be a coprime pair. Then $K^{m+n, n}$ is obtained from $K^{m,n}$ by the Gluck twist along $K^{\ve n, \ve m}$, where $\ve= 1$ if $m>0$ and $\ve= -1$ if $m<0$.
\end{thm}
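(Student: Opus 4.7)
My plan is to adapt the decomposition and rearrangement used in the proof of Theorem~\ref{thm1}, now applied to the Gluck twist along $K^{\varepsilon n, \varepsilon m}$, while additionally tracking where the 2-knot $K^{m,n}$ is sent. The starting point is to regroup the five pieces of \eqref{dec} so that $N(K^{\varepsilon n,\varepsilon m}) = (B^4_1 \cup B^4_2) \cup (V_m \times E^{c\ast}_m)$ and $X^{\varepsilon n,\varepsilon m} = (V_n \times E^{c\ast}_n) \cup (X \times S^1)$. Then $K^{m,n} = E_n \cup F$ meets $N(K^{\varepsilon n, \varepsilon m})$ in exactly two 2-discs containing the fixed points (inside $B^4_1 \cup B^4_2$), and meets $X^{\varepsilon n, \varepsilon m}$ in a cylinder equal to the core of $V_n \times E^{c\ast}_n$. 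The Gluck twist along $K^{\varepsilon n, \varepsilon m}$ preserves the cylinder but removes the two 2-discs, which are replaced by the polar 2-discs $D^2 \times \partial I \times \{0\}$ inside the newly Gluck-surgered $S^2 \times B^2$. The image of $K^{m,n}$ in $\Sigma(K^{\varepsilon n,\varepsilon m})$ is thus the long 2-sphere built from these polar discs and the preserved cylinder.

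Next, I would verify that the gluing matrix $\bar{g}$ for $V_n \times E^{c\ast}_n$ glued to $X \times S^1$ in the decomposition of $X^{\varepsilon n, \varepsilon m}$ equals the same matrix $\begin{pmatrix} \varepsilon m & -\varepsilon n \\ \beta & \alpha \end{pmatrix}$ as in \eqref{matrix}; this is because $\partial V_n = \partial V_m$ is the same torus with the same $(\theta_1,\theta_2)$ coordinates, the free orbits wrap identically, and the meridian $c_\theta$ of $K$ in $S^3$ is globally defined. I would then write the analog of \eqref{sigma} for $\Sigma(K^{\varepsilon n, \varepsilon m})$ and apply the rearrangement maps $u, v, w$ from the proof of Theorem~\ref{thm1}, yielding an analog of \eqref{sigma2}. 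The second piece's gluing matrix becomes
$$\begin{pmatrix} \varepsilon m & -\varepsilon n \\ \beta & \alpha \end{pmatrix} \begin{pmatrix} 1 & -1 \\ 0 & 1 \end{pmatrix} \begin{pmatrix} 0 & 1 \\ -1 & 1 \end{pmatrix} = \begin{pmatrix} \varepsilon(m+n) & -\varepsilon n \\ -\alpha+\beta & \alpha \end{pmatrix},$$
which, after the sign-flip $(\theta_1,x,\theta_2) \mapsto (-\theta_1,x,-\theta_2)$ when $\varepsilon(m+n) < 0$, matches the canonical form of the gluing in the decomposition \eqref{compKmn} of $X^{m+n, n}$. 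Hence the rearranged second piece is identified with $X^{m+n, n}$.

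The crucial geometric point that distinguishes this argument from the proof of Theorem~\ref{thm1} is that here the piece which moves from the second piece into the first piece under the rearrangement is $V_n \times E^{c\ast}_n$, rather than $V_m \times E^{c\ast}_m$. Its core is precisely the cylinder of $K^{m,n}$ preserved by the Gluck twist. Together with the polar caps $D^2 \times \partial I \times B^2$ of the new $S^2 \times B^2$, this cylinder assembles into the core 2-sphere of the rearranged first piece, which is therefore the image of $K^{m,n}$. Since the second piece is now identified with $X^{m+n, n}$, this core must be the 2-knot $K^{m+n, n}$, proving the theorem.

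The principal obstacle I anticipate is verifying rigorously that the image of $K^{m,n}$ under the Gluck twist coincides geometrically with the core of the rearranged first piece. In particular, one must trace the boundary circles of the preserved cylinder of $K^{m,n}$ through the Gluck twist map $\sigma$ and through $u, v, w$, and check that the two 2-discs in the new $S^2 \times B^2$ capping these circles are precisely the polar discs $D^2 \times \partial I \times \{0\}$ appearing in the rearranged first piece. Once this geometric coincidence is established, the matrix identification of the second piece as $X^{m+n, n}$ immediately completes the proof.
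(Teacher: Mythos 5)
Your approach is genuinely different from the paper's. You adapt the $u,v,w$ matrix rearrangement from the proof of Theorem~\ref{thm1} to re-express $\Sigma(K^{\ve n,\ve m})$ and identify the second piece with $X^{m+n,n}$, then try to deduce that the tracked image of $K^{m,n}$ is $K^{m+n,n}$. The paper instead avoids the rearrangement entirely for Theorem~\ref{thm2}: it works directly with the unrearranged decomposition \eqref{sigma3}, shows that the $S^1$-action on the complement piece extends over the glued-in $(D^2\times\partial I\times B^2)\cup(D^2\times I\times\partial B^2)$ by explicitly computing the twisted action, checks that the extended action has exceptional orbits of orders $n$ and $m+n$, and concludes via the Fintushel--Pao classification that the orbit data is $\{(S^3,K),m+n,n\}$ and hence that the core is $K^{m+n,n}$. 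The paper's route is cleaner because the image of $K^{m,n}$ is visibly the core of $(D^2\times\partial I\times B^2)\cup_\lambda(\partial D^2_m\times E^{c\ast}_n\times D^2_n)$ in the original coordinates, so no tracking through homeomorphisms is needed.

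Beyond the obstacle you already flag (tracing $K^{m,n}$ through $u$, $v$, $w$), there is a genuine gap in your final step: ``Since the second piece is now identified with $X^{m+n,n}$, this core must be the 2-knot $K^{m+n,n}$.'' A $2$-knot is not determined by its complement -- indeed Corollary~\ref{cor} of this very paper produces inequivalent branched twist spins with homeomorphic complements, so this inference cannot stand as written. What you actually need is to show that the \emph{entire} rearranged decomposition, with the Gluck twist $\mu\cup\mu'$ undone, reproduces the decomposition \eqref{compKmn} of $S^4$ along $K^{m+n,n}$, including the gluing of the neighborhood piece and the $S^1$-equivariant structure on every block; only then can you invoke the Fintushel--Pao classification (as the paper does) to identify the core. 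The matrix match you compute is a necessary ingredient but is not by itself sufficient: it pins down the gluing of $V_{m+n}\times E^{c\ast}_{m+n}$ to $X\times S^1$ but does not on its own certify the rest of the circle-action structure, and the passage from ``has the right complement with the right gluing matrix'' to ``is the branched twist spin $K^{m+n,n}$'' is exactly where the $S^1$-action argument is doing the work in the paper's proof.
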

\begin{proof}
We decompose $S^4$ along $K^{\ve n,\ve m}$ into five pieces and glue them so that it realizes the Gluck twist of $S^4$ along $K^{\ve n,\ve m}$.
The glued 4-manifold $\Sigma(K^{\ve n,\ve m})$ is given as 
\begin{equation}\label{sigma3}
 \left((D^2 \times \partial I \times B^2) \cup (D^2 \times I \times \partial B^2)\right)
 \cup_{\tilde{\lambda} \cup {\tilde{\lambda}}^{\prime}} 
 \left((\partial D_m^2 \times E_n^{c\ast}\times D_n^2) \cup_e (X \times S^1) \right),
 \end{equation}
 where the gluing map $\tilde{\lambda} \cup {\tilde{\lambda}}^{\prime}$ is the one given in the decomposition \eqref{sigma} with
 exchanging the orders $m$ and $n$.
 Note that $\Sigma(K^{\ve n,\ve m})$ is $S^4$ by Theorem~\ref{thm1}.
The union $(D^2 \times \partial I \times B^2)\cup_{\lambda} (\partial D_m^2 \times E_n^{c\ast} \times D_n^2)$
 constitutes a neighborhood of a 2-knot in $\Sigma(K^{\ve n, \ve m})$ and our assertion is that the core of this union is $K^{m+n,n}$.
Since the decomposition~\eqref{sigma3} is given according to the orbit data $\{(S^3,K),m,n\}$, 
the complement of $(D^2 \times \partial I \times B^2)\cup (D^2 \times I \times \partial B^2)$ 
has the $S^1$-action such that 
$\{0\} \times I \times \partial B^2$ consists of exceptional orbits of order $n$.
Thanks to the classification of Fintushel and Pao, it is enough to show that this $S^1$-action extends to $ (D^2 \times \partial I \times B^2) \cup (D^2 \times I \times \partial B^2)$ with exceptional orbits of order $n$ and $m+n$.

Recall that, in case of \eqref{sigma}, $\lambda: D^2 \times \partial I \times \partial B^2 \to D^2 \times \partial I \times \partial B^2$ is defined by
$$
\lambda((r_1,\theta_1),x,\theta_2) = ((r_1,\theta_1 -\theta_2), x, \theta_2).
$$
In the current setting, $m$ and $n$ are exchanged, and the ``rolls'' of $D^2$ and $B^2$ are exchanged.
Hence the gluing map $\lambda$ after the Gluck twist is changed into the map
$\tilde{\lambda} : \partial D^2 \times \partial I \times B^2 \to \partial D^2 \times \partial I \times B^2$ defined by
$$
\tilde{\lambda}(\theta_1,x,(r_2,\theta_2)) = (\theta_1, x, (r_2,-\theta_1 + \theta_2)),
$$
instead of $\lambda$.
The $S^1$-action on $\partial D^2_m \times  E^{c\ast}_n \times D^2_n$ before the Gluck twist  is given as
$$
\psi \cdot (\theta_1, x , (r_2,\theta_2)) \mapsto (\theta_1 -\ve n\psi , x ,(r_2,\theta_2 + \ve m\psi)).
$$
Hence,  after the Gluck twist, 
the $S^1$-action on $\partial D^2 \times I \times  B^2 $ is given by
$$
\psi \cdot (\theta_1 , x ,(r_2, \theta_2)) \mapsto (\theta_1 -\ve^{\prime} n\psi , x , (r_2,\theta_2 + \ve^{\prime}(m + n) \psi)),
$$
where $\ve^{\prime}= 1$ if $m+n>0$ and $\ve^{\prime}= -1$ if $m+n<0$.
Thus the $S^1$-action on the complement of 
$(D^2 \times \partial I \times B^2) \cup (D^2 \times I \times \partial B^2)$
 extends to $D^2 \times I \times \partial B^2$ with exceptional orbits of order $m+n$.
Since $\partial (D^2 \times \partial I \times B^2)$ is the union of $\partial D^2 \times \partial I \times B^2$ and $D^2 \times \partial I \times \partial B^2$ and they have the $S^1$-actions with exceptional orbits of order $n$ and $m+n$, respectively, these actions extend to $D^2\times\partial I\times B^2$ canonically. 
Hence the $S^1$-action extends to the whole $\Sigma(K^{\ve n,\ve m})$.
\end{proof}

Note that we can regard the Gluck twist of $S^4$ along $K^{\ve n,\ve m}$ as the replacement of $E^{\ast}_n$ by $E^{\ast}_{m+n}$ in the orbit space $S^3$, see Figure~\ref{changeoforbits}.

\begin{figure}[htbp]
\centering
\includegraphics[scale=0.6]{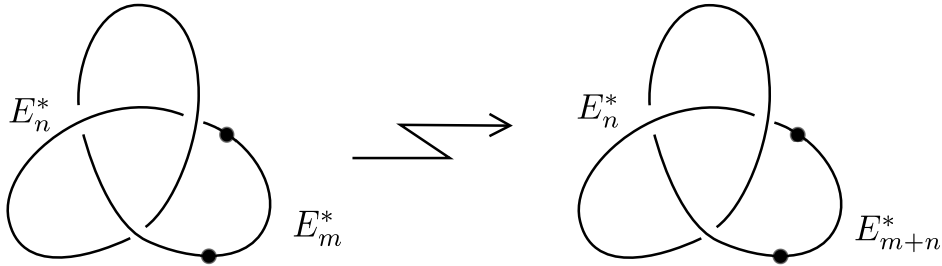}
\caption{The change of the orbit space by the Gluck twist along $K^{\ve n,\ve m}$}
\label{changeoforbits}
\end{figure}

We close this paper with a corollary.
In~\cite{G} Gordon showed that 
$K^{m,1}$ is not determined by its complement if $m$ is odd and the universal cover of $m$-fold cyclic branched cover of $S^3$ along $K$ is $\mathbb{R}^3$. 
This means that, due to Theorem~\ref{thm2}, $K^{m,1}$ is not equivalent to $K^{\ve^{\prime} m,\ve^{\prime} (m+1)}$ but the knot complements are homeomorphic.
More generally, Plotnick showed  any non-trivial fibered 2-knot $L$ with odd monodromy is not determined by its complement~\cite{Pl}.
In order to remove the assumption of Gordon's statement, 
he used an algebraic property of the knot group and ``special isometry'' on the second homotopy group of  a ``spun manifold" of the closure of the fiber.
Applying his observation to Theorem~\ref{thm2}, we may obtain the following corollary.
\begin{cor}\label{cor}
Assume that $m$ is odd and $K^{m,n}$ is non-trivial. Then $K^{m,n}$ and $K^{\ve^{\prime}m,\ve^{\prime}(m+n)}$ are not equivalent but their complements are homeomorphic, 
where $\ve^{\prime}= 1$ if $m+n>0$ and $\ve^{\prime}= -1$ if $m+n<0$.
\end{cor}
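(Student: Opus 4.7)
The plan is to split the corollary into two parts: the complement homeomorphism and the inequivalence. For the first, I apply Theorem~\ref{thm2} with the pair $(\ve n,\ve m)$ in place of $(m,n)$, which is legitimate since $\ve m=|m|\in\N$ and $\gcd(n,|m|)=1$. After the sign bookkeeping ($\ve|m|=m$ and $\ve n+|m|=\ve(m+n)$), the theorem reads: the Gluck twist of $S^4$ along $K^{m,n}$ produces $\Sigma(K^{m,n})=S^4$ carrying an induced $S^1$-action, under which $K^{\ve n,\ve m}$ is sent to $K^{\ve(m+n),\ve m}$. From the end of the proof of Theorem~\ref{thm2}, this induced action has exceptional orbit orders $|m|$ and $|m+n|$, so its two branched twist spins are $K^{\ve(m+n),\ve m}$ and, by the paper's standard formula $(a,b)\mapsto(\mathrm{sgn}(a)\,b,\,|a|)$ for the dual branched twist spin of a pair, $K^{\ve' m,\ve'(m+n)}$.

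The Gluck-twist core of $K^{m,n}$ inside $\Sigma(K^{m,n})$ is $S^1$-invariant --- the construction in Theorem~\ref{thm2} is equivariant and the core $S^2\times\{0\}$ of $S^2\times D^2$ is preserved --- so it must equal one of these two branched twist spins. Since it and the image of $K^{\ve n,\ve m}$ arise from disjoint pieces of the decomposition \eqref{sigma3}, they are distinct 2-knots, and the Gluck-twist core must therefore equal $K^{\ve' m,\ve'(m+n)}$. By the defining property of the Gluck twist, this core has the same complement as $K^{m,n}$, giving $\Xmn\cong X^{\ve' m,\ve'(m+n)}$. For the inequivalence I invoke Plotnick's theorem~\cite{Pl}: a non-trivial fibered 2-knot with odd-order monodromy is not determined by its complement. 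Branched twist spins are fibered with periodic monodromy of order $|m|$, so the hypotheses of the corollary place $K^{m,n}$ in Plotnick's framework, and his argument --- the knot group together with a ``special isometry'' on $\pi_2$ of the spun manifold of the fiber --- distinguishes $K^{m,n}$ from its Gluck-twist partner $K^{\ve' m,\ve'(m+n)}$.

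I expect the main obstacle to be the identification step in the first paragraph: confirming that the Gluck-twist core of $K^{m,n}$ is the dual branched twist spin of the new $S^1$-action, rather than the image of $K^{\ve n,\ve m}$. This reduces to tracking which piece of the decomposition \eqref{sigma3} the core sits in and checking equivariance of the Gluck twist with respect to the extended $S^1$-action constructed at the end of the proof of Theorem~\ref{thm2}. The remaining sign arithmetic and the invocation of Plotnick are then routine.
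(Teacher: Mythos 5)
Your proof takes essentially the same route as the paper: apply Theorem~\ref{thm2} with the indices swapped (to $(\ve n,\ve m)$), deduce that the Gluck-twist core of $K^{m,n}$ is the dual branched twist spin $K^{\ve' m,\ve'(m+n)}$ of the induced $S^1$-action on $\Sigma(K^{m,n})=S^4$, and invoke Plotnick's Theorem 6.2 for the inequivalence. One minor imprecision: the two branched twist spins of the new action are not disjoint --- they meet transversely at the two fixed points and share the piece $D^2\times\partial I\times B^2$ --- but they are distinct 2-knots (differing on the remaining pieces), so your identification of the core as $K^{\ve' m,\ve'(m+n)}$ still goes through.
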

\begin{proof}
Consider $S^4$ with the $S^1$-action of orbit data $\{(S^3,K), m,n\}$ and set
$\Kmn = \En \cup F$ and $K^{\ve n,\ve m} = \Em \cup F$.
Applying Theorem~\ref{thm2} to the Gluck twist along $K^{m,n}$,
we see that the 2-knot $K^{\ve(m+n),\ve m}$ is obtained from $K^{\ve n, \ve m}$ by the Gluck twist along $K^{m,n}$
and the image of $K^{m,n}$ in $\Sigma(K^{m,n})$ is $K^{\ve^{\prime}m, \ve^{\prime}(m+n)}$.
Moreover, $K^{m,n}$ and $K^{\ve^{\prime}m,\ve^{\prime}(m+n)}$ have the same complement.
Since the order of the monodormy of $K^{m,n}$ is $m$,  
$K^{m,n}$ and $K^{\ve^{\prime}m,\ve^{\prime}(m+n)}$ are not equivalent by Theorem 6.2 in~\cite{Pl}  if $m$ is odd.
\end{proof}

\begin{rem}
A pair of two 2-knots $(L,L^{\prime})$ is a {\it Montesinos twin} if $L$ and $L^{\prime}$ meet transversely twice.
By definition, the twin $(K^{m,n}, K^{\ve n,\ve m})$ is a Montesinos twin.
A 2-knot $L$ is said to be {\it reflexive} if $L$ and the image of $L$ by the Gluck twist along $L$ are equivalent.
It is known by Hillman and Plotnick that $K^{m,n}$ is not reflexive if $K$ is a non-trivial torus or hyperbolic 1-knot, $m > n$ and $m \geq 3$~\cite{HP}.
By Theorem~\ref{thm2}, 
the twin $(K^{m,n}, K^{\ve n,\ve m})$ in $S^4$ is changed into the twin $(K^{\ve^{\prime}m,\ve^{\prime}(m+n)}, K^{\ve(m+n),\ve m})$ in $\Sigma(K^{m,n})$ by the Gluck twist along $K^{m,n}$ 
and the twin $(K^{\ve^{\prime}m,\ve^{\prime}(m+n)}, K^{\ve(m+n),\ve m})$ in $\Sigma(K^{m,n})$ is changed into
 $(K^{\ve^{\prime\prime}m,\ve^{\prime\prime}(2m+n)}, K^{\ve(2m+n), \ve m})$ in 
 $\Sigma(K^{\ve^{\prime}m,\ve^{\prime}(m+n)})$ by the Gluck twist along $K^{\ve^{\prime}m,\ve^{\prime}(m+n)},$
  where $\ve^{\prime\prime} = 1$ if $2m+n\geq 0$ and  $\ve^{\prime\prime} = -1$ if $2m+n < 0$.
 The composite of these Gluck twists is nothing but the product of two Gluck twists along $K^{m,n}$.
 Hence $K^{m,n}$ and $K^{\ve^{\prime\prime}m,\ve^{\prime\prime}(2m+n)}$ are equivalent. 
 This implies that the assumption $m > n$ in the above assertion in~\cite{HP} is not necessary.
\end{rem}
\begin{rem}
In~\cite{F}, we found a sufficient condition to distinguish branched twist spins by using the first elementary ideal when $m$ is even. 
However we cannot apply this argument in the case where the original 1-knots of branched twist spins are the same.
The above corollary gives a sufficient condition to distinguish such pairs of branched twist spins.
\end{rem}

\end{document}